\theoremstyle{definition}
\newtheorem{theorem}{Theorem}
\newtheorem{lemma}[theorem]{Lemma}
\begin{document}

\title{\vspace{-2cm}An Idempotent Cryptarithm}

\author{Samer Seraj\\
Existsforall Academy\\
samer\_seraj@outlook.com}

\maketitle

\begin{abstract}
    Notice that the square of $9376$ is $87909376$ which has as its rightmost four digits $9376$. To generalize this remarkable fact, we show that, for each integer $n\ge 2$, there exists at least one and at most two positive integers $x$ with exactly $n$-digits in base-$10$ (meaning the leftmost or $n^{\text{th}}$ digit from the right is non-zero) such that squaring the integer results in an integer whose rightmost $n$ digits form the integer $x$. We then generalize the argument to prove that, in an arbitrary number base $B\ge 2$ with exactly $m$ distinct prime factors, an upper bound is $2^m -2$ and a lower bound is $2^{m-1}-1$ for the number of such $n$-digit positive integers. For $n=1$, there are exactly $2^m -1$ solutions, including $1$ and excluding $0$.
\end{abstract}

\noindent In Kordemsky's book, \textit{The Moscow Puzzles}, the following multiplicative cryptarithm is posed as Problem 272 (G), where ``... digits are represented by letters and asterisks. Identical letters stand for identical digits, different letters stand for different digits. An asterisk stands for any digit.''
$$\begin{array}{cccccccc}
    &&&& A & T & O & M\\
    &&&\times & A & T & O & M\\ \hline
    *&*&*& * & A & T & O & M
\end{array}$$
Only a hint is provided in the solutions section: ``What is the last digit of $M\times M$? What four digits have this property? Which two can be immediately eliminated? Then consider $OM\times OM$, and so on. Can you prove, on your own, that $ATOM=9376$ is the only possible solution?''

In terms of modular arithmetic, we want to find the (apparently unique) $4$-digit positive integer $x$ such that $$x^2 \equiv x \pmod{10^4}.$$ It is natural to extend the question by fixing any positive integer $n$ and number base $B\ge 2$ and asking for all non-negative solutions $x$ to the idempotence congruence $$x^2 \equiv x \pmod{B^n}$$ that have $n$ or fewer digits in base-$B$. The last congruence is equivalent to saying that that the rightmost $n$ digits of $x^2$ in base-$B$ form the non-negative integer $x$ in base-$B$ (possibly with $0$'s padded on the left end) because higher order digits disappear modulo $B^n$. Moreover, in line with Kordemsky's implicit condition that $A\ne 0$ in $ATOM$, we can wonder how many of these solutions have \textit{exactly} $n$ digits, meaning the leftmost of the $n$ digits is non-zero. It is with this question that we will be occupied, first in base-$10$ and later for general bases $B\ge 2$. The quest for idempotent elements is not new, but the author is unaware of a previous result about finding a lower bound on the number of idempotent elements with exactly $n$ digits.

\section{Structure of solutions in base-10}

For $n=1$, it is easy to check all ten residues modulo $10$ and find that the only base-$10$ digits that satisfy $$x^2\equiv x\pmod{10^n}$$ are $x=0,1,5,6$. Now we tackle $n\ge 2$.

\begin{lemma}\label{lemma1}
For each integer $n\ge 2$, a multiplicative inverse of $5$ modulo $2^n$ is $5^{2^{n-2}-1}$.
\end{lemma}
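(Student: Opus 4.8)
The plan is to recognize that the assertion ``$5^{2^{n-2}-1}$ is a multiplicative inverse of $5$ modulo $2^n$'' is, upon multiplying through by $5$, precisely the congruence $5^{2^{n-2}} \equiv 1 \pmod{2^n}$. Since $n \ge 2$, the exponent $2^{n-2}-1$ is a non-negative integer, so the expression $5^{2^{n-2}-1}$ is well-defined (when $n=2$ it is simply $5^0 = 1$, which is indeed inverse to $5 \equiv 1 \pmod 4$). Thus the entire lemma is equivalent to proving $5^{2^{n-2}} \equiv 1 \pmod{2^n}$ for every integer $n \ge 2$, and I would carry out that proof by induction on $n$.

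For the base case $n = 2$, I would simply note $5^{2^{0}} = 5 \equiv 1 \pmod{4}$. For the inductive step, assume $5^{2^{n-2}} \equiv 1 \pmod{2^n}$, so that $5^{2^{n-2}} = 1 + 2^n k$ for some integer $k$. Squaring gives $5^{2^{n-1}} = (1 + 2^n k)^2 = 1 + 2^{n+1} k + 2^{2n} k^2$. Since $2n \ge n+1$ for all $n \ge 1$, both of the last two terms are divisible by $2^{n+1}$, whence $5^{2^{n-1}} \equiv 1 \pmod{2^{n+1}}$, which is exactly the claimed congruence with $n$ replaced by $n+1$. This closes the induction, and multiplying the resulting congruence $5^{2^{n-2}} \equiv 1 \pmod{2^n}$ through by the factorization $5^{2^{n-2}} = 5 \cdot 5^{2^{n-2}-1}$ yields the lemma.

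There is no serious obstacle here: the argument is a clean instance of the principle that squaring an integer congruent to $1$ modulo $2^n$ produces an integer congruent to $1$ modulo $2^{n+1}$, so each squaring step gains one factor of $2$. The only point requiring a moment's care is bookkeeping the exponents — confirming that $2^{n-2}$ doubles to $2^{n-1} = 2^{(n+1)-2}$ so that the induction variable and the exponent stay synchronized — and checking the degenerate behavior at $n = 2$, where the exponent $2^{n-2}-1$ collapses to $0$.
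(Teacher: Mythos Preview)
Your proof is correct and follows essentially the same approach as the paper: reduce the claim to the congruence $5^{2^{n-2}}\equiv 1\pmod{2^n}$, verify the base case $n=2$, and in the inductive step write $5^{2^{n-2}}=1+2^n k$ and square to gain an extra factor of $2$. The paper's argument is identical in structure and detail, differing only in minor presentation (it factors the squared expression as $1+x_n(1+x_n 2^{n-1})2^{n+1}$ rather than citing $2n\ge n+1$).
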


\begin{proof}
We will show by induction on $n\ge 2$ that
$$5^{2^{n-2}} \equiv 1\pmod{2^n}.$$
As a result, it will hold that
$$\begin{array}{rclr}
    5\cdot 5^{2^{n-2}-1} &\equiv & 1 & \pmod{2^n},\\
    5^{-1} & \equiv & 5^{2^{n-2}-1} & \pmod{2^n}.
\end{array}$$
The base case $n=2$ holds because $$5^{2^{2-2}} = 5 \equiv 1 \pmod{2^2}.$$ For the induction hypothesis, suppose $$5^{2^{n-2}} \equiv 1 \pmod{2^n}$$ for some integer $n\ge 2$, which is equivalent to assuming that $$5^{2^{n-2}} = 1 + x_n 2^n$$ for some integer $x_n$. Squaring the equation yields
\begin{align*}
    5^{2^{n-1}} &= 1 + x_n 2^{n+1} + x_n^2 2^{2n}\\
    &= 1 + x_n(1+x_n 2^{n-1}) 2^{n+1}\\
    &\equiv 1\pmod{2^{n+1}}.
\end{align*}
This completes the inductive step and therefore the induction argument.
\end{proof}

\begin{theorem}\label{theorem2}
For integers $n\ge 2$, there are exactly four distinct non-negative solutions to the congruence $$x^2\equiv x\pmod{10^n}$$ that are less than $10^n$. They are $0,1$ and the remainders of
\begin{align*}
    a_n &= 5^{n 2^{n-2}},\\
    b_n &= 1 - a_n
\end{align*}
modulo $10^n$. We will call the former two the ``trivial solutions'' and the latter two the ``non-trivial solutions'' $r_n$ (corresponding to $a_n$) and $s_n$ (corresponding to $b_n$).
\end{theorem}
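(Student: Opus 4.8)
The plan is to reduce the congruence to a statement about coprime prime-power factors via the Chinese Remainder Theorem. Writing $x^2 \equiv x \pmod{10^n}$ as $x(x-1) \equiv 0 \pmod{2^n 5^n}$ and using that $\gcd(x, x-1) = 1$, I would argue that $2^n$ must divide exactly one of $x$ and $x-1$, and independently $5^n$ must divide exactly one of them. This yields four cases: the two "unmixed" cases $2^n 5^n \mid x$ and $2^n 5^n \mid x-1$ force $x \equiv 0$ and $x \equiv 1 \pmod{10^n}$ respectively, while each of the two "mixed" cases determines $x$ uniquely modulo $10^n$ by CRT. Thus there are exactly four solutions below $10^n$: the trivial ones $0, 1$, together with the residue satisfying $\{x \equiv 0 \pmod{2^n},\ x \equiv 1 \pmod{5^n}\}$ and the residue satisfying $\{x \equiv 1 \pmod{2^n},\ x \equiv 0 \pmod{5^n}\}$.

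Next I would identify the explicit formulas with these two mixed residues. For $a_n = 5^{n 2^{n-2}}$, since $2^{n-2} \ge 1$ for $n \ge 2$ we have $n 2^{n-2} \ge n$, hence $5^n \mid a_n$ and so $a_n \equiv 0 \pmod{5^n}$. Modulo the other prime power, Lemma \ref{lemma1} gives $5^{2^{n-2}} \equiv 1 \pmod{2^n}$, and raising to the $n$-th power yields $a_n = \left(5^{2^{n-2}}\right)^n \equiv 1 \pmod{2^n}$. Hence $a_n$ reduces to the mixed residue called $r_n$ in the statement, and $b_n = 1 - a_n$ satisfies $b_n \equiv 1 \pmod{5^n}$ and $b_n \equiv 0 \pmod{2^n}$, so it reduces to the other mixed residue $s_n$. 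Both are genuine solutions, since $x \equiv 0 \pmod{5^n}$ forces $x^2 \equiv x \pmod{5^n}$, $x \equiv 1 \pmod{2^n}$ forces $x^2 \equiv x \pmod{2^n}$ (and symmetrically for $b_n$), so $x^2 \equiv x \pmod{10^n}$ follows by CRT.

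Finally I would check distinctness: modulo $2^n$ the four values $0, 1, r_n, s_n$ are congruent to $0, 1, 1, 0$ and modulo $5^n$ to $0, 1, 0, 1$, so no two of them agree in both coordinates, and since $n \ge 1$ the residues $0$ and $1$ are distinct in each component. The only real subtlety is the bookkeeping — verifying that coprimality genuinely forces the "exactly one of" dichotomy for each of $2^n$ and $5^n$, and matching the right mixed case to $r_n$ versus $s_n$ — but no step needs a heavy computation beyond invoking Lemma \ref{lemma1} and the Chinese Remainder Theorem. I would also note in passing that $r_n + s_n \equiv 1 \pmod{10^n}$, since $a_n + b_n = 1$, as this relation will be convenient later.
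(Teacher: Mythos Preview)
Your proof is correct and follows essentially the same approach as the paper: reduce via coprimality of $x$ and $x-1$ to four CRT systems indexed by $(t_1,t_2)\in\{0,1\}^2$, then use Lemma~\ref{lemma1} to pin down the non-trivial solutions. The only cosmetic difference is that the paper \emph{derives} the formula $5^{n2^{n-2}}$ by solving $5^n y\equiv 1\pmod{2^n}$ for $y$, whereas you \emph{verify} that the given $a_n$ already satisfies $a_n\equiv 0\pmod{5^n}$ and $a_n\equiv 1\pmod{2^n}$; both are equally valid and equally short.
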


\begin{proof}
First we note that $x$ is a solution to $$x^2 \equiv x\pmod{10^n}$$ if and only if $$x(x-1)\equiv 0\pmod{2^n 5^n}.$$ This holds if and only if $2^n$ and $5^n$ both divide $x(x-1)$ because $2^n$ and $5^n$ are coprime. Since $x$ and $x-1$ are coprime, all of the factors of $2$ belong to one of them and all of the factors of $5$ belong to one of them. This leads to four pairs of congruences:
\begin{align*}
    x &\equiv t_1 \pmod{2^n},\\
    x &\equiv t_2 \pmod{5^n},
\end{align*}
where $(t_1,t_2)$ can be $(0,0),(1,1),(1,0),(0,1)$. By the Chinese remainder theorem, each of the four systems gives rise to a unique solution modulo $10^n$. The $t_1=t_2=0$ case leads to the solution $x=0$, and the $t_1=t_2=1$ case leads to the solution $x=1$. For the other two take more effort to solve:
\begin{itemize}
    \item If $t_1=1$ and $t_2=0$, then there exists an integer $y$ such that $x=5^n y$, so $$5^n y = x \equiv 1\pmod{2^n}.$$ By Lemma \ref{lemma1}, we know the multiplicative inverse of $5$ modulo $2^n$, so $$y\equiv (5^{-1})^n \equiv \left(5^{2^{n-2}-1}\right)^n\pmod{2^n}.$$ So there exists an integer $z$ such that
    \begin{align*}
        x &= 5^n y= 5^n\left[\left(5^{2^{n-2}-1}\right)^n+2^n z\right]=5^{n2^{n-2}}+10^n z,\\
        x &\equiv 5^{n2^{n-2}}\pmod{10^n}.
    \end{align*}
    \item If $t_1=0$ and $t_2=1$, then we can avoid repeating the above computations as follows. If the solution here is $b_n$ and the solution in the previous case is called $a_n$, then the two pairs of congruences
    \begin{align*}
        a_n &\equiv 1 \pmod{2^n},\\
        a_n &\equiv 0 \pmod{5^n}
    \end{align*}
    and
    \begin{align*}
        b_n &\equiv 0 \pmod{2^n},\\
        b_n &\equiv 1 \pmod{5^n}
    \end{align*}
    lead to
    $$
    \begin{cases}
        a_n+b_n \equiv 1 \pmod{2^n}\\
        a_n+b_n \equiv 1 \pmod{5^n}
    \end{cases}
    \implies a_n+b_n \equiv 1\pmod{10^n}.
    $$
    As a result, we can choose $b_n =1-a_n$.
\end{itemize}
Note that all four solutions leave distinct remainders modulo $10^n$ because they satisfy distinct systems of two congruences with the same constituent moduli $2^n$ and $5^n$. In other words, if any two of these remainders were the same modulo $10^n$, then they would satisfy the same pair of congruences modulo $2^n$ and $5^n$, which would be untrue.
\end{proof}

\begin{lemma}\label{lemma3}
For each integer $n\ge 2$, let $r_n$ and $s_n$ be as in Theorem \ref{theorem2}; let $r_1=5$ and $s_1=6$. Then for each pair of positive integers $n>k$,
\begin{align*}
    r_n &\equiv r_k \pmod{10^k},\\
    s_n &\equiv s_k \pmod{10^k}.
\end{align*}
\end{lemma}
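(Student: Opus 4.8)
The plan is to exploit the fact that each $r_n$ (respectively $s_n$) is characterized as the unique residue modulo $10^n$ solving a fixed pair of congruences modulo the prime powers $2^n$ and $5^n$, and then to observe that such congruences automatically persist when the exponent $n$ is lowered to any $k<n$.

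First I would record, for every integer $n\ge 1$, the defining congruences
\begin{align*}
    r_n &\equiv 1 \pmod{2^n}, & r_n &\equiv 0 \pmod{5^n},\\
    s_n &\equiv 0 \pmod{2^n}, & s_n &\equiv 1 \pmod{5^n}.
\end{align*}
For $n\ge 2$ this is immediate from Theorem~\ref{theorem2}: we have $r_n\equiv a_n\pmod{10^n}$ and $a_n$ was constructed precisely to satisfy $a_n\equiv 1\pmod{2^n}$ and $a_n\equiv 0\pmod{5^n}$, while $s_n\equiv b_n=1-a_n$, which interchanges the two roles. For $n=1$ one simply checks that $r_1=5$ and $s_1=6$ satisfy the displayed congruences.

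Next, fix positive integers $n>k$. Since $2^k\mid 2^n$ and $5^k\mid 5^n$, reducing the congruences above modulo the smaller prime powers gives $r_n\equiv 1\pmod{2^k}$ and $r_n\equiv 0\pmod{5^k}$. But these are exactly the congruences that, by the Chinese remainder theorem, determine a single residue modulo $2^k5^k=10^k$, and that residue is the one represented by $r_k$. Hence $r_n\equiv r_k\pmod{10^k}$. Running the identical argument with the roles of $2$ and $5$ interchanged yields $s_n\equiv s_k\pmod{10^k}$.

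I do not anticipate a genuine obstacle; the only points needing care are the uniqueness step — invoking that the pair of congruences modulo $2^k$ and $5^k$ has a unique solution modulo $10^k$, which is the same CRT input already used in the proof of Theorem~\ref{theorem2} — and the (routine but necessary) verification that the base values $r_1,s_1$ obey the same pattern, so that the statement is not vacuous when $k=1$.
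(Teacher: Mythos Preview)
Your proposal is correct and follows essentially the same approach as the paper: you identify $r_n$ and $s_n$ by their defining CRT congruences modulo $2^n$ and $5^n$, reduce those congruences to the smaller moduli $2^k$ and $5^k$ via divisibility, and invoke CRT uniqueness at level $k$. The only cosmetic difference is that you make the verification of the base values $r_1=5$, $s_1=6$ explicit, which the paper leaves implicit.
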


\begin{proof}
Let $n>k$ be a pair of positive integers. If $x$ is an integer such that the congruences
\begin{align*}
    x &\equiv t_1 \pmod{2^n},\\
    x &\equiv t_2 \pmod{5^n},
\end{align*}
hold, then the congruences
\begin{align*}
    x &\equiv t_1 \pmod{2^k},\\
    x &\equiv t_2 \pmod{5^k}
\end{align*}
also hold since $2^k \mid 2^n$ and $5^k \mid 5^n$. By the Chinese remainder theorem, the first pair of congruences has the unique solution $0\le r_n<10^n$ for $(t_1,t_2)=(1,0)$ and the unique solution $0\le s_n< 10^n$ for for $(t_1,t_2)=(0,1)$. Since $r_k$ and $s_k$ uniquely solve the second pair of congruences modulo $10^k$ for the respective $(t_1,t_2)$, we find that $r_n \equiv r_k \pmod{10^k}$ and $s_n \equiv s_k \pmod{10^k}$.
\end{proof}

Lemma \ref{lemma3} explains why Table 1 shows that each value of $r_n$ or $s_n$ in a column simply pads a digit (possibly $0$) on to the left of the previous value in the same class.

\begin{table}[h]\label{table1}
\begin{center}
\begin{tabular}{r|r|r}
    $n$  &        $r_n$ &        $s_n$ \\ \hline
    $1$  &          $5$ &          $6$ \\
    $2$  &         $25$ &         $76$ \\
    $3$  &        $625$ &        $376$ \\
    $4$  &       $0625$ &       $9376$ \\
    $5$  &      $90625$ &      $09376$ \\
    $6$  &     $890625$ &     $109376$ \\
    $7$  &    $2890625$ &    $7109376$ \\
    $8$  &   $12890625$ &   $87109376$ \\
    $9$  &  $212890625$ &  $787109376$ \\
    $10$ & $8212890625$ & $1787109376$
\end{tabular}
\end{center}
\caption{The two non-trivial solutions for the first ten positive integers $n$ in base-$10$}
\end{table}

Observe in Table 1 that all pairs of non-units digits of $r_n$ and $s_n$ in corresponding places add up to $9$. We will prove this fact and use it to prove Theorem \ref{theorem4}.

\begin{theorem}\label{theorem4}
Let $n$ be a positive integer. In the notation of Theorem \ref{theorem2} and Lemma \ref{lemma3}, let the non-trivial solution corresponding to $a_n$ be $r_n$ and the non-trivial solution corresponding to $b_n$ be $s_n$. At least one of $r_n$ or $s_n$ has a non-zero $n^{\text{th}}$ digit from the right, meaning at least one of $r_n$ or $s_n$ is an $n$-digit integer.
\end{theorem}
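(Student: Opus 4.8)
The plan is to pin down the exact value of $r_n + s_n$ (not merely its residue modulo $10^n$), extract from it a digit-by-digit relationship between $r_n$ and $s_n$, and then read off the conclusion from the parity of a single digit sum.

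For $n = 1$ the claim is immediate, since $r_1 = 5$ and $s_1 = 6$ are both one-digit integers. So assume $n \ge 2$. By Theorem \ref{theorem2}, the numbers $r_n$ and $s_n$ are the remainders modulo $10^n$ of $a_n$ and $b_n = 1 - a_n$, so $r_n + s_n \equiv 1 \pmod{10^n}$; moreover the four solutions $0, 1, r_n, s_n$ are distinct, so $2 \le r_n, s_n \le 10^n - 1$, which forces $4 \le r_n + s_n \le 2\cdot 10^n - 2$. The only integer in that interval congruent to $1$ modulo $10^n$ is $10^n + 1$, so in fact $r_n + s_n = 10^n + 1$ exactly.

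Next I would descend to the digits. By Lemma \ref{lemma3} with $k = 1$, the units digit of $r_n$ is $5$ and that of $s_n$ is $6$. Subtracting these and dividing by $10$ gives $\lfloor r_n / 10 \rfloor + \lfloor s_n / 10 \rfloor = (10^n - 10)/10 = 10^{n-1} - 1$, the integer whose $n-1$ digits are all $9$. Writing $r_n = \sum_{i=0}^{n-1} d_i 10^i$ and $s_n = \sum_{i=0}^{n-1} e_i 10^i$ with digits in $\{0,\dots,9\}$, the key observation is that $\lfloor s_n/10 \rfloor = (10^{n-1} - 1) - \lfloor r_n/10 \rfloor$ is a subtraction of a number of at most $n-1$ digits from the all-nines number, hence is carried out digit-wise with no borrowing; therefore $e_i = 9 - d_i$, i.e. $d_i + e_i = 9$, for every $i$ with $1 \le i \le n-1$. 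This is exactly the ``non-units digit pairs sum to $9$'' pattern visible in Table 1.

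Finally, taking $i = n - 1$ gives $d_{n-1} + e_{n-1} = 9$; since $9$ is odd, $d_{n-1}$ and $e_{n-1}$ cannot both be zero, so at least one of $r_n$ or $s_n$ has a non-zero $n^{\text{th}}$ digit from the right and is a genuine $n$-digit integer. The only step requiring real care is the first one: upgrading the congruence $r_n + s_n \equiv 1 \pmod{10^n}$ to the exact equality $r_n + s_n = 10^n + 1$, which is precisely where the distinctness of $r_n$ and $s_n$ from the trivial solutions $0$ and $1$ is used; everything afterward is routine base-$10$ bookkeeping.
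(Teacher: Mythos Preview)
Your proof is correct and follows essentially the same line as the paper's: both arguments first upgrade the congruence $r_n+s_n\equiv 1\pmod{10^n}$ to the exact equality $r_n+s_n=10^n+1$ via the same bounding trick, and then deduce that the leading digits satisfy $d_{n-1}+e_{n-1}=9$. The only cosmetic difference is in how that last digit identity is extracted: the paper subtracts the level-$(n{-}1)$ identity $r_{n-1}+s_{n-1}=10^{n-1}+1$ (invoking Lemma~\ref{lemma3} at $k=n-1$) to isolate the top digit directly, whereas you invoke Lemma~\ref{lemma3} at $k=1$ to peel off the units digits and then use a nines-complement observation to get $d_i+e_i=9$ for all $i\ge 1$ at once.
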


\begin{proof}
We will first prove that, for all positive integers $n$, $$r_n+s_n=10^n +1.$$ We know that the following two congruences hold:
\begin{align*}
    a_n &\equiv r_n \pmod{10^n},\\
    1-a_n= b_n &\equiv s_n \pmod{10^n}.
\end{align*}
Adding them leads to $$r_n+s_n-1\equiv 0\pmod{10^n}.$$
Since neither $r_n$ nor $s_n$ are either of the trivial solutions $0$ or $1$, they are each at least $2$, resulting in $$0<3\le r_n+s_n-1.$$ Moreover, $r_n$ and $s_n$ are both at most $n$-digit integers, so $$r_n+s_n-1\le (10^n -1)+(10^n -1)-1=2\cdot 10^n -3<2\cdot 10^n.$$ The only way that $r_n+s_n-1$ is a multiple of $10^n$ in the interval $(0,2\cdot 10^n)$ is if $$r_n+s_n=10^n +1.$$

So, for $n\ge 2$, we know that
\begin{align*}
    r_{n-1}+s_{n-1} &= 10^{n-1}+1,\\
    r_n+s_n &= 10^n +1.
\end{align*}
Then
\begin{align*}
    (r_n-r_{n-1})+(s_n-s_{n-1}) &= (r_n+s_n)-(r_{n-1}+s_{n-1}) \\
    &= (10^n +1)-(10^{n-1} +1)\\
    &= 9\cdot 10^{n-1}.
\end{align*}
This proves that the $n^{\text{th}}$ digits from the right of $r_n$ and $s_n$ add up to $9$ for $n\ge 2$, so at least one of them has to be non-zero.
\end{proof}

With the existence result established, it may be asked when there exists exactly one $n$-digit solution or exactly two $n$-digit solutions. In light of the observation about pairing digits in corresponding places in Theorem \ref{theorem4}, it holds that in order for there to be exactly one $n$-digit solution, one of $r_n$ or $s_n$ must have leftmost digit $0$ and the other must have leftmost digit $9$. However, computational results hint that this is a difficult property to predict. The author encourages the reader to explore this area.

\section{Bounds for the number of solutions in base-B}

Next, we will generalize the result to any number base. As Hilbert said, ``The art of doing mathematics consists in finding that special case which contains all the germs of generality.'' Indeed, the method for base-$10$ illuminates the path to a more general result as follows.

\begin{theorem}
Let $B\ge 2$ be an integer with exactly $m\ge 1$ distinct prime factors. Then, for each integer $n\ge 2$, there are at most $2^m -2$ and at least $2^{m-1}-1$ positive integers $x$ with exactly $n$ digits in base-$B$ (meaning the leftmost or $n^{\text{th}}$ digit from the right is non-zero) such that $$x^2 \equiv x \pmod{B^n}.$$ For $n=1$, there are exactly $2^m -1$ positive (single-digit) solutions.
\end{theorem}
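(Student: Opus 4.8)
The plan is to reduce the problem to counting idempotents in the ring $\mathbb{Z}/B^n\mathbb{Z}$ and then to decide which of their standard representatives in $\{0,1,\dots,B^n-1\}$ are large enough to have a nonzero leading digit. First I would write $B=p_1^{e_1}\cdots p_m^{e_m}$ with the $p_i$ distinct primes, so that $B^n=p_1^{ne_1}\cdots p_m^{ne_m}$, and apply the Chinese remainder theorem to get a ring isomorphism between $\mathbb{Z}/B^n\mathbb{Z}$ and the product of the $\mathbb{Z}/p_i^{ne_i}\mathbb{Z}$. Each factor has the form $\mathbb{Z}/p^k\mathbb{Z}$, whose only idempotents are $0$ and $1$ by the same coprimality-of-$x$-and-$x-1$ argument used in the proof of Theorem \ref{theorem2}. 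Hence $x^2\equiv x\pmod{B^n}$ has exactly $2^m$ solutions in $\{0,1,\dots,B^n-1\}$, indexed by tuples $(t_1,\dots,t_m)\in\{0,1\}^m$ via $x\equiv t_i\pmod{p_i^{ne_i}}$. Two of these are the trivial solutions $0$ and $1$, and for $n\ge 2$ both are strictly less than $B\le B^{n-1}$, hence are not $n$-digit integers; this already yields the upper bound of $2^m-2$.

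For the lower bound I would exploit complementation. The map $x\mapsto 1-x$ (reduced modulo $B^n$) sends idempotents to idempotents, since $(1-x)^2=1-2x+x^2\equiv 1-x$ whenever $x^2\equiv x$, and in the coordinates above it is the involution $(t_i)\mapsto(1-t_i)$, which has no fixed point (a fixed point would require $t_i=1-t_i$ for each $i$). So the $2^m$ solutions split into $2^{m-1}$ complementary pairs $\{x,1-x\}$, one of which is $\{0,1\}$; in each of the remaining $2^{m-1}-1$ pairs both members are non-trivial, so writing $r,s$ for their representatives in $\{0,\dots,B^n-1\}$ we have $r,s\ge 2$ and $r+s\equiv 1\pmod{B^n}$. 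The bounds $4\le r+s\le 2B^n-2$ then force $r+s=B^n+1$, exactly as in the proof of Theorem \ref{theorem4}.

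The last step is the leading-digit argument, which I expect to be the only place needing care: from $r+s=B^n+1$ I would argue that $r$ and $s$ cannot both have leading ($n$th-from-the-right) digit $0$, since otherwise $r,s<B^{n-1}$ and then $r+s<2B^{n-1}\le B^n<B^n+1$, a contradiction — this is where $B\ge 2$ enters. Thus each of the $2^{m-1}-1$ non-trivial pairs contributes at least one $n$-digit solution, and since distinct pairs are disjoint these solutions are distinct, giving the lower bound $2^{m-1}-1$. Finally, for $n=1$ the same Chinese remainder theorem count produces $2^m$ idempotents modulo $B$, of which exactly one, namely $0$, fails to be a positive single-digit integer, so there are precisely $2^m-1$ positive solutions, with $1$ among them.
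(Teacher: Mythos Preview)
Your proposal is correct and follows essentially the same route as the paper: factor $B$, use the Chinese remainder theorem to count the $2^m$ idempotents, discard $0$ and $1$ for the upper bound, pair the remaining solutions via the involution $x\mapsto 1-x$, and pin down $r+s=B^n+1$ by the same inequality squeeze.

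The one genuine difference is in the leading-digit step. The paper invokes the analogue of Lemma~\ref{lemma3} (that the level-$n$ solution reduces to the level-$(n-1)$ solution in the same class), subtracts the identities $r_n+s_n=B^n+1$ and $r_{n-1}+s_{n-1}=B^{n-1}+1$, and concludes that the two leading digits sum to exactly $B-1$. Your argument is more direct: if both leading digits vanished then $r,s<B^{n-1}$, whence $r+s<2B^{n-1}\le B^n<B^n+1$, a contradiction. This bypasses the need for the Lemma~\ref{lemma3} analogue entirely and is strictly simpler for the purpose of the stated theorem; what it gives up is the sharper conclusion that the leading digits actually sum to $B-1$, which the paper's computation yields as a byproduct.
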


\begin{proof}
Let the base be $$B = p_1^{e_1}p_2^{e_2}\cdots p_m^{e_m}$$ for distinct primes factors $p_i$ and positive integer multiplicities $e_i$. Using the fact that the prime powers $p_i^{e_i}$ are all coprime to each other and that $x$ and $x-1$ are coprime to each other, we find that $x$ is a solution to $$x^2 \equiv x \pmod {B^n}$$ if and only if a system of $m$ congruences of the following form is satisfied:
\begin{align*}
    x &\equiv 0 \text{ or } 1 \pmod{p_1^{ne_1}},\\
    x &\equiv 0 \text{ or } 1 \pmod{p_2^{ne_2}},\\
    &\vdots\\
    x &\equiv 0 \text{ or } 1 \pmod{p_m^{ne_m}}.
\end{align*}
Each right side can be $0$ or $1$, so there are $2^m$ possible systems of congruences. Each such system has a unique solution $0\le x<B^n$ by the Chinese remainder theorem. So there are at most $2^m$ solutions with $n$ digits. If all of the $t_i$ equal $0$ then the solution is $0$, and if all of the $t_i$ equal $1$ then the solution is $1$. We may omit the $x=0,1$ solutions (which have fewer than $n\ge 2$ digits) for an upper bound of $2^m -2$. Now we will use a pairing technique. Given a particular system of congruences of the above form, we can pair it with its twin which swaps the $0$'s for $1$'s and $1$'s for $0$'s on the right sides of the congruences. This produces $2^{m-1}-1$ non-trivial twins. If $a_n$ is a solution to one of the two systems and $b_n$ is a solution to the other, we find that $$a_n + b_n \equiv 1 \pmod{B^n}$$ by adding corresponding pairs of congruences from the two systems of congruences, and then combining all of the moduli. We will show that the remainder modulo $B^n$ of at least one of $a_n$ or $b_n$ has exactly $n$ digits in base-$B$.

Note that, if a particular instance of the above system, say
\begin{align*}
    x &\equiv t_1 \pmod{p_1^{ne_1}},\\
    x &\equiv t_2 \pmod{p_2^{ne_2}},\\
    &\vdots\\
    x &\equiv t_m \pmod{p_m^{ne_m}},
\end{align*}
is satisfied, then the system
\begin{align*}
    x &\equiv t_1 \pmod{p_1^{ke_1}},\\
    x &\equiv t_2 \pmod{p_2^{ke_2}},\\
    &\vdots\\
    x &\equiv t_m \pmod{p_m^{ke_m}}
\end{align*}
is also satisfied for positive integers $k<n$. This means that, similar to Lemma \ref{lemma3}, we can classify the solutions over all positive integers $n$ into $2^m$ classes (including the $0$-class and the $1$-class) corresponding to the $2^m$ different $m$-tuples $$(t_1,t_2,\ldots,t_m)\in \{0,1\}^m,$$ with each solution within a particular class padding a digit (possibly $0$) to the left of the previous solution in that class.

Let $r_n$ be the remainder of $a_n$ and $s_n$ be the remainder of $b_n$ modulo $B^n$. Then $$r_n+s_n \equiv a_n+b_n\equiv 1\pmod{B^n}.$$ Now we will follow the method shown in Theorem \ref{theorem4} of using inequalities to prove that $$r_n+s_n=B^n +1$$ for all positive integers $n$. Since we have already omitted the trivial twin solutions $0$ and $1$ from the possibilities, $r_n$ and $s_n$ are each at least $2$, so $$0<3=2+2-1\le r_n+s_n-1.$$ Moreover, $r_n$ and $s_n$ each have at most $n$ digits in base-$B$, so $$r_n+s_n -1 \le (B^n -1)+(B^n -1) -1 = 2\cdot B^n -3 < 2\cdot B^n.$$ The only way that $B^n\mid r_n+s_n-1$, with $r_n+s_n-1\in (0,2\cdot B^n)$ is if $$r_n+s_n=B^n +1,$$ as predicted.

So, for all integers $n\ge 2$, we can use the equations
\begin{align*}
    r_{n-1}+s_{n-1} &= B^{n-1}+1,\\
    r_n+s_n &= B^n +1
\end{align*}
to find that
\begin{align*}
    (r_n-r_{n-1})+(s_n-s_{n-1}) &= (r_n+s_n)-(r_{n-1}+s_{n-1}) \\
    &= (B^n +1)-(B^{n-1} +1)\\
    &= (B-1) B^{n-1}.
\end{align*}
Thus, for $n\ge 2$, the $n^{\text{th}}$ digit from the right of $r_n$ and the $n^{\text{th}}$ digit from the right of $s_n$ add up to $B-1>0$. So both of those digits cannot be $0$, proving that at least one of the solutions in this twin has exactly $n$ digits in base-$B$. There are $2^{m-1}-1$ of these non-trivial twins, which establishes the lower bound. 

Lastly, the one-digit positive solutions (for $n=1$) are all the solutions in the interval $(0,B)$ of the congruence $$x(x-1)\equiv 0\pmod {B}.$$ There are $2^m$ solutions by distributing the maximal prime power factors of $B$ across the coprime integers $x$ and $x-1$. We omit $0$ to get the exact number $2^m -1$.
\end{proof}

\begin{center}
    \textbf{Acknowledgement}
\end{center}

The author thanks Dr. Edward Barbeau for providing helpful suggestions regarding simplifying the expression for the second non-trivial solution in Theorem \ref{theorem2}. This also resulted in a simplification of the proof of Theorem \ref{theorem4}.


\begin{thebibliography}{1}

\bibitem{Kordemsky} Boris A. Kordemsky, The Moscow Puzzles: 359 Mathematical Recreations, Dover Publications Inc., New York, 1992., p.115, 274

\end{thebibliography}
\end{document}